   \theoremstyle{plain}
   \newtheorem{thm}{Theorem}[section]
   \newtheorem{prop}[thm]{Proposition}
   \newtheorem{lem}[thm]{Lemma}
   \theoremstyle{definition}
   \newtheorem{defn}[thm]{Definition}
   \newtheorem{example}[thm]{Example}
   \theoremstyle{remark}
 \numberwithin{equation}{section}
\author{V. Manuilov}
\date{}
\address{Moscow Center for Fundamental and Applied Mathematics, Moscow State University,
Leninskie Gory 1, Moscow, 
119991, Russia}
\email{manuilov@mech.math.msu.su}
\thanks{The research was supported by RSF (project No. 21-11-00080).}
\title{On the inverse semigroup of bimodules over a $C^*$-algebra}
\begin{document}

\begin{abstract}
It was noticed recently that, given a metric space $(X,d_X)$, the equivalence classes of metrics on the disjoint union of the two copies of $X$ coinciding with $d_X$ on each copy form an inverse semigroup $M(X)$ with respect to concatenation of metrics. Now put this inverse semigroup construction in a more general context, namely, we define, for a $C^*$-algebra $A$, an inverse semigroup $S(A)$ of Hilbert $C^*$-$A$-$A$-bimodules. When $A$ is the uniform Roe algebra $C^*_u(X)$ of a metric space $X$, we construct a map $M(X)\to S(C^*_u(X))$ and show that this map is injective, but not surjective in general. This allows to define an analog of the inverse semigroup $M(X)$ that does not depend on the choice of a metric on $X$ within its coarse equivalence class.

\end{abstract}

\maketitle

\section*{Introduction}

It was noticed in \cite{M1} that, given a metric space $(X,d_X)$, the equivalence classes of metrics on the disjoint union of the two copies of $X$ coinciding with $d_X$ on each copy form an inverse semigroup $M(X)$ with respect to concatenation of metrics. It was found that $M(X)$ depends on a choice of a metric on $X$ within the coarse equivalence class of $d_X$. 

Now we are able to put this inverse semigroup construction in a more general context, namely, we define, for a $C^*$-algebra $A$, an inverse semigroup $S(A)$ of Hilbert $C^*$-$A$-$A$-bimodules. $C^*$-bimodules used to define this inverse semigroup are $A$-$A$-imprimitivity bimodules \cite{Rieffel} without the requirement that they should be full. 

When $A$ is the uniform Roe algebra $C^*_u(X)$ of a metric space $X$, we construct a map $M(X)\to S(C^*_u(X))$ and show that this map is injective, but not surjective in general. This allows to define an analog of the inverse semigroup $M(X)$ that does not depend on the choice of a metric on $X$ within its coarse equivalence class. 

Our references for inverse semigroup theory are \cite{Howie}, \cite{Lawson}. Details on coarse geometry and Roe algebras can be found in \cite{Roe}.

\section{Associative Hilbert $C^*$-bimodules as morphisms. The semigroup $S(A)$}

Let $A$, $B$ be $C^*$-algebras. 

\begin{defn}
A Banach space $M$ is an \textit{associative Hilbert $C^*$-bimodule (left over $A$ and right over $B$)} if
\begin{enumerate}
\item
$M$ is a left $A$-module and a right $B$-module, and $(am)b=a(mb)$ for any $a\in A$, $b\in B$, $m\in M$, and both modules are non-degenerate, i.e. $AM$ and $MB$ are dense in $M$;
\item
$M$ is a left Hilbert $C^*$-module over $A$ and a right Hilbert $C^*$-module over $B$ with respect to the inner products  ${}_A\langle \cdot,\cdot\rangle$ and $\langle \cdot,\cdot\rangle_B$, respectively; 
\item
$A$ (resp., $B$) acts on $M$ by bounded adjointable operators with respect to the Hilbert $C^*$-module structure over $B$ (resp., over $A$), i.e.  
$\langle am,m'\rangle_B=\langle m,a^*m'\rangle_B$ and ${}_A\langle mb,m'\rangle={}_A\langle m,m'b^*\rangle$ for any $a\in A$, $b\in B$, $m,m'\in M$;
\item
the norm on $M$ is equivalent to the norms $\|\cdot\|_A=\|{}_A\langle\cdot,\cdot\rangle\|^{1/2}$ and $\|\cdot\|_B=\|\langle\cdot,\cdot\rangle_B\|^{1/2}$;
\item
${}_A\langle m,n\rangle r=m\langle n,r\rangle_B$ for any $m,n,r\in M$.

\end{enumerate}

\end{defn}

We denote the set of associative Hilbert $C^*$-bimodules for fixed $A$ and $B$ by $\mathcal S(A,B)$. When $B=A$, we write $\mathcal S(A)$. For the set of isomorphism classes of associative Hilbert $C^*$-bimodules we write $S(A,B)$ and $S(A)$ respectively.

Let $J_A={}_A\langle M,M\rangle\subset A$ (resp., $J_B=\langle M,M\rangle_B\subset B$) be the closure of the linear span of ${}_A\langle m,n\rangle$ (resp., of $\langle m,n\rangle_B$), $m,n\in M$. These are ideals in $A$ and $B$, respectively (by ideals we mean closed two-sided ideals). The definition of $A$-$B$-imprimitivity bimodules \cite{Rieffel} requires, besides the items 1-5 above, that $M$ is a \textit{full} Hilbert $C^*$-bimodule, i.e. that $J_A=A$ and $J_B=B$. Then such bimodules are used to define strong Morita equivalence, but we don't assume that $M$ is full. Note that we consider $M\in\mathcal S(A,B)$ as a left $J_A$-module and a right $J_B$-module. Then $M$ is a $J_A$-$J_B$-imprimitivity bimodule (and $J_A$ and $J_B$ are strongly Morita equivalent). 

Associative Hilbert $C^*$-bimodules can be considered as morphisms on the category of $C^*$-algebras. If $C$ is a $C^*$-algebra and $N\in\mathcal S(B,C)$ then $M\otimes_B N$ is an associative Hilbert $C^*$-bimodule, left over $A$ and right over $C$, with respect to the inner products determined by 
$$
{}_A\langle m\otimes n,m'\otimes n'\rangle={}_A\langle m,m'\langle n,n'\rangle_B\rangle
$$ 
and 
$$
\langle m\otimes n,m'\otimes n'\rangle_C=\langle\langle m,m'\rangle_B^* n,n'\rangle_C, 
$$
$m,m'\in M$, $n,n'\in N$. We write $M\cdot N$ for $M\otimes_B N$. This makes $S(A)$ a semigroup.

For $M\in\mathcal S(A,B)$, the dual module $M^*\in\mathcal S(B,A)$ was defined in \cite{Rieffel}, Definition 6.17 (it was denoted there by $\widetilde{M}$). Recall that $M^*$ as a set is the same as $M$, while the bimodule structure is given by 
$$
b\tilde m=(mb^*)\tilde{\phantom{a}}, \quad \tilde{m}a=(a^*m)\tilde{\phantom{a}}, \quad {}_B\langle \tilde m,\tilde n\rangle=\langle m,n\rangle_B, \quad \langle\tilde m,\tilde n\rangle_A={}_A\langle m,n\rangle, 
$$
where we write $\tilde m$ for $m\in M$ considered as an element of $M^*$. When $B=A$ we say that $M$ is selfadjoint if $M^*$ is isomorphic to $M$ as an associative Hilbert $C^*$-bimodule.

\begin{example}
Let $B=A$, $J\subset A$ an ideal. Then $J\in\mathcal S(A)$. In particular, $J=\{0\}$ (resp., $J=A$) represents the zero (resp., the unit) element in the semigroup $S(A)$. 

\end{example}

\begin{lem}\label{Rieffel-lemma_6.22}
Let $M\in\mathcal S(A,B)$. Then $M^*\cdot M\cong J_B$, where $J_B=\langle M,M\rangle_B$.

\end{lem}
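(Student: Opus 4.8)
The plan is to write down the obvious candidate for the isomorphism and to verify that it is an inner-product-preserving bimodule map with dense range; such a map is automatically a unitary isomorphism of associative Hilbert $C^*$-bimodules. Since $M^*\in\mathcal S(B,A)$ and $M\in\mathcal S(A,B)$, the product $M^*\cdot M$ is the interior tensor product $M^*\otimes_A M$, an object of $\mathcal S(B)$, and I would compare it with the ideal $J_B=\langle M,M\rangle_B$, regarded as a $B$-$B$-bimodule via ${}_B\langle x,y\rangle=xy^*$ and $\langle x,y\rangle_B=x^*y$. The map to test is
$$
\Phi(\tilde m\otimes n)=\langle m,n\rangle_B,\qquad m,n\in M,
$$
defined first on the algebraic tensor product $M^*\odot M$ and taking values in $J_B$, so that its range is by definition dense in $J_B$.

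First I would check that $\Phi$ respects the $A$-balancing. For $a\in A$ one has $\tilde m\, a=(a^*m)\tilde{\phantom{a}}$ in $M^*$, so $\Phi(\tilde m\, a\otimes n)=\langle a^*m,n\rangle_B$, while $\Phi(\tilde m\otimes a n)=\langle m,an\rangle_B$; these agree by the adjointability relation $\langle am,m'\rangle_B=\langle m,a^*m'\rangle_B$ of item (3). Hence $\Phi$ descends to $M^*\otimes_A M$. Next I would verify that $\Phi$ is a $B$-$B$-bimodule homomorphism: the right action is immediate since $\langle m,nb\rangle_B=\langle m,n\rangle_B b$, and for the left action one uses $b\,\tilde m=(mb^*)\tilde{\phantom{a}}$ together with $\langle mb^*,n\rangle_B=b\langle m,n\rangle_B$.

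The heart of the argument is that $\Phi$ preserves both inner products, and here the imprimitivity identity (item (5)) ${}_A\langle m,n\rangle r=m\langle n,r\rangle_B$ does the work. For the right $B$-inner product, expanding the tensor-product formula $\langle\tilde m\otimes n,\tilde m'\otimes n'\rangle_B=\langle\langle\tilde m,\tilde m'\rangle_A^{\,*}\,n,n'\rangle_B$ and using $\langle\tilde m,\tilde m'\rangle_A={}_A\langle m,m'\rangle$, item (5), and the adjoint formulas for $\langle\cdot,\cdot\rangle_B$, I expect to arrive at $\langle m,n\rangle_B^{\,*}\langle m',n'\rangle_B$, which is exactly $\langle\Phi(\tilde m\otimes n),\Phi(\tilde m'\otimes n')\rangle_B$ in $J_B$. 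The computation for the left inner product ${}_B\langle\cdot,\cdot\rangle$ runs symmetrically, again reducing through item (5) to $\langle m,n\rangle_B\langle m',n'\rangle_B^{\,*}$. I regard this bookkeeping — keeping straight the two sesquilinearity conventions, the twisted module structure of $M^*$, and the direction in which item (5) is applied — as the main obstacle; everything else is formal.

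Finally, once $\Phi$ preserves the $B$-valued inner product it is isometric for the norm $\|\cdot\|_B$, kills the null vectors of the algebraic tensor product, and extends to an isometry of the completed module $M^*\otimes_A M$ into $J_B$. Its range contains every $\langle m,n\rangle_B$ and is therefore dense, so the isometry is onto $J_B$; being a bimodule map that intertwines both inner products, $\Phi$ is an isomorphism in $\mathcal S(B)$, which is the assertion $M^*\cdot M\cong J_B$.
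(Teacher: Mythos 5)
Your proof is correct, but it takes a genuinely different route from the paper's. The paper never verifies anything about the map $\tilde m\otimes n\mapsto\langle m,n\rangle_B$ itself: it cites Rieffel's Lemma 6.22, which already asserts that this map is an isomorphism $M^*\otimes_{J_A}M\to J_B$, where $J_A={}_A\langle M,M\rangle$. The entire content of the paper's proof is then the bridge between two tensor products: it shows $M^*\otimes_{J_A}M=M^*\otimes_A M$ by an approximate-unit argument in $J_A$, estimating ${}_A\langle (a-u_\lambda a)m,(a-u_\lambda a)m\rangle$ to approximate $am$ by $jm$ with $j=u_\lambda a\in J_A$, so that the generators $\tilde m a\otimes n-\tilde m\otimes an$ of the kernel of the quotient map $M^*\otimes_{J_A}M\to M^*\otimes_A M$ vanish. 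You instead inline what Rieffel's lemma provides: you define the map on the algebraic tensor product and check the balancing relation over all of $A$ (not just over $J_A$) directly from the adjointability axiom (item 3), then check that both inner products are preserved via the associativity axiom (item 5). Because your balancing check works over $A$ from the outset, the distinction between $\otimes_{J_A}$ and $\otimes_A$ never arises, and the approximate-unit argument becomes unnecessary. What each approach buys: the paper's proof is shorter, leans on the literature, and isolates the one point that is actually new in this setting (tensoring over the ideal agrees with tensoring over the algebra); yours is self-contained and, in a sense, cleaner. For the record, your two claimed identities do check out under the standard sesquilinearity conventions:
\begin{equation*}
\langle\tilde m\otimes n,\tilde m'\otimes n'\rangle_B
=\bigl\langle {}_A\langle m',m\rangle\, n,\,n'\bigr\rangle_B
=\bigl\langle m'\langle m,n\rangle_B,\,n'\bigr\rangle_B
=\langle m,n\rangle_B^{\,*}\,\langle m',n'\rangle_B,
\end{equation*}
and symmetrically ${}_B\langle\tilde m\otimes n,\tilde m'\otimes n'\rangle=\langle m,n\rangle_B\,\langle m',n'\rangle_B^{\,*}$, so well-definedness, isometry, and surjectivity onto the closure of the range, i.e. onto $J_B$, follow exactly as you say.
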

\begin{proof}
It was shown in Lemma 6.22 of \cite{Rieffel} that the map $R:M^*\otimes_{J_A}M\to J_B$ defined by $R(\tilde m\otimes n)=\langle m,n\rangle_B$ is an isomorphism. As $M^*\cdot M=M^*\otimes_A M$, it remains to check that $M^*\otimes_{J_A}M=M^*\otimes_A M$. 

Let $m\in M$, $a\in A$, $\varepsilon>0$, $(u_\lambda)_{\lambda\in\Lambda}$ is an approximate unit in the ideal $J_A$. Then 
\begin{eqnarray*}
{}_A\langle (a-u_\lambda a)m,(a-u_\lambda a)m \rangle&=&(a-u_\lambda a){}_A\langle m,m\rangle (a-u_\lambda a)^*\\
&\leq& \|a\|\cdot\|(a-u_\lambda a){}_A\langle m,m\rangle\|\\
&=&\|a\|\cdot\|a'-u_\lambda a'\|,
\end{eqnarray*}
where $a'=a{}_A\langle m,m\rangle\in J_A$. Therefore, there exists $j\in J_A$ of the form $j=u_\lambda a$ such that $\|am-jm\|<\varepsilon$. Then for any $m,n\in M$, any $a\in A$ and any $\varepsilon>0$ there exists $j\in J$ such that 
$$
\|(\tilde m a\otimes n-\tilde m\otimes an)-(\tilde m j\otimes n-\tilde m\otimes jn)\|<\varepsilon, 
$$
hence $\|\tilde m a\otimes n-\tilde m\otimes an\|=0$ in $M^*\otimes_{J_A}M$. As the kernel of the canonical quotient map $M^*\otimes_{J_A}M\to M^*\otimes_A M$ is generated by differences $\tilde m a\otimes n-\tilde m\otimes an$, $a\in A$, $m,n\in M$, this kernel is trivial.

\end{proof}

\begin{defn}
An associative Hilbert $C^*$-bimodule $M\in\mathcal S(A)$ is \textit{idempotent} if $M\cdot M\cong M$. A selfadjoint idempotent is a \textit{projection}.

\end{defn}  

\begin{example}
Let $J\subset A$ be an ideal. Consider it as a Hilbert $C^*$-bimodule, $J\in\mathcal S(A)$. Then $J$ is a projection. Clearly, it is selfadjoint. As $J_A=J$, by Lemma \ref{Rieffel-lemma_6.22}, $J^*\otimes_A J\cong J$. 

\end{example}

\begin{lem}
If $M\in\mathcal S(A)$ is a projection then there exists an ideal $J\subset A$ such that $M\cong J$.

\end{lem}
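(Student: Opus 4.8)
The plan is to combine the two defining properties of a projection with the preceding lemma, and it turns out the argument is almost entirely a chain of isomorphisms. By definition, $M$ being a projection means it is both idempotent, $M\cdot M\cong M$, and selfadjoint, $M^*\cong M$. The candidate ideal is $J=\langle M,M\rangle_A$, that is, the ideal denoted $J_B$ in Lemma \ref{Rieffel-lemma_6.22} in the case $B=A$; as noted right after the first definition, $J_B$ is automatically a closed two-sided ideal of $A$.

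The first step is to apply Lemma \ref{Rieffel-lemma_6.22} directly to $M\in\mathcal S(A)$, which yields $M^*\cdot M\cong J$. The second step is to use selfadjointness to rewrite the left-hand side in terms of $M$ alone: a bimodule isomorphism $\phi\colon M^*\to M$ induces $\phi\otimes\id\colon M^*\otimes_A M\to M\otimes_A M$, and since $\phi$ intertwines the right $A$-actions and preserves the inner products, this map is an isomorphism of associative Hilbert $C^*$-bimodules, so $M^*\cdot M\cong M\cdot M$. The final step is to invoke idempotency, $M\cdot M\cong M$, and chain everything together: $M\cong M\cdot M\cong M^*\cdot M\cong J$. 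Hence $M$ is isomorphic to the ideal $J=\langle M,M\rangle_A$, as required.

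I expect the only point requiring care to be the middle step, namely checking that the product $M_1\cdot M_2=M_1\otimes_A M_2$ is functorial with respect to bimodule isomorphisms, so that $M^*\cong M$ really transports to $M^*\cdot M\cong M\cdot M$. This is routine: one verifies that $\phi\otimes\id$ respects the balanced tensor relation (because $\phi$ commutes with the right $A$-action) and preserves both inner products that define the bimodule structure on the product. Everything else is a formal composition of the isomorphisms supplied by Lemma \ref{Rieffel-lemma_6.22}, selfadjointness, and idempotency, so I do not anticipate any substantial obstacle.
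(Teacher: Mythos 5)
Your proof is correct and follows exactly the paper's own argument: take $J=\langle M,M\rangle_A$, use selfadjointness and idempotency to get $M\cong M\cdot M\cong M^*\cdot M$, and conclude by Lemma \ref{Rieffel-lemma_6.22} that $M^*\cdot M\cong J$. The functoriality point you flag (that $M^*\cong M$ gives $M^*\cdot M\cong M\cdot M$) is indeed routine and is used silently in the paper.
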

\begin{proof}
If $M$ is a projection then $M\cdot M\cong M^*\cdot M\cong M$. Let $\langle M,M\rangle_A=J\subset A$. Then $M\cong M^*\cdot M\cong J$. 

\end{proof}

\begin{lem}
The semigroup $S(A)$ is regular.

\end{lem}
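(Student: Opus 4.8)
The plan is to prove regularity directly from the definition: a semigroup is regular when every element $a$ admits some $x$ with $axa=a$, so I must produce, for each class $[M]\in S(A)$, a class $[N]$ with $M\cdot N\cdot M\cong M$. The natural candidate is the dual bimodule, $N=M^*$. Since $M\in\mathcal S(A)=\mathcal S(A,A)$ gives $M^*\in\mathcal S(A)$ as well, we have $[M^*]\in S(A)$, and the whole statement reduces to the single isomorphism $M\cdot M^*\cdot M\cong M$.

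First I would contract the middle factor using Lemma \ref{Rieffel-lemma_6.22}. Writing $J=\langle M,M\rangle_A$ for the right inner-product ideal, that lemma gives $M^*\cdot M\cong J$. Associativity of the product (which is what makes $S(A)$ a semigroup) then yields
\[
M\cdot M^*\cdot M\;\cong\;M\cdot(M^*\cdot M)\;\cong\;M\cdot J\;=\;M\otimes_A J,
\]
so everything comes down to proving $M\otimes_A J\cong M$, i.e. that tensoring $M$ on the right with its own right inner-product ideal returns $M$. To establish this I would use the multiplication map $\mu\colon M\otimes_A J\to M$, $\mu(m\otimes j)=mj$, which is manifestly an $A$-$A$-bimodule homomorphism, and check two things: that $\mu$ preserves the inner products and that it has dense range. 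For the right inner product the defining formula for the product together with $\langle j,j'\rangle_A=j^*j'$ in $J$ gives
\[
\langle m\otimes j,m'\otimes j'\rangle_A=j^*\langle m,m'\rangle_A j'=\langle mj,m'j'\rangle_A ,
\]
and the left inner product is handled by the analogous computation, where the adjointability relation ${}_A\langle mb,m'\rangle={}_A\langle m,m'b^*\rangle$ of item 3 is exactly what forces the two sides to agree. Density of the range is the statement that $MJ$ is dense in $M$: taking an approximate unit $(u_\lambda)$ of the ideal $J=\langle M,M\rangle_A$, one gets $\langle m-mu_\lambda,m-mu_\lambda\rangle_A\to0$ because $\langle m,m\rangle_A\in J$, precisely the approximate-unit estimate already used in the proof of Lemma \ref{Rieffel-lemma_6.22}, whence $mu_\lambda\to m$ and $\overline{MJ}=M$. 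Since an inner-product-preserving map is isometric for the norms $\|\cdot\|_A,\|\cdot\|_B$ (equivalent to the given norm by item 4), $\mu$ is an isometry with dense range, hence an isomorphism of associative Hilbert $C^*$-bimodules.

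Combining these steps gives $M\cdot M^*\cdot M\cong M$, so $M^*$ is a weak inverse of $M$ and $S(A)$ is regular. The step I expect to be the main obstacle is the isomorphism $M\otimes_A J\cong M$: both the inner-product bookkeeping (keeping the left and right structures consistent via items 3 and 5) and, more importantly, the passage from the formal $\otimes_A$ to the genuinely balanced tensor product need care. As in Lemma \ref{Rieffel-lemma_6.22}, it is the approximate-unit argument that controls the difference between $\otimes_A$ and $\otimes_J$; conceptually this step is just the standard Morita relation $M\otimes_J M^*\otimes_{J_A}M\cong M$ for $M$ viewed as a $J_A$-$J$-imprimitivity bimodule, re-proved at the level of $\otimes_A$ so as to stay inside the semigroup $S(A)$.
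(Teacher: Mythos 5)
Your proposal is correct and follows essentially the same route as the paper: take $M^*$ as the pseudoinverse, contract $M^*\cdot M\cong J_A$ via Lemma \ref{Rieffel-lemma_6.22}, and then prove $M\otimes_A J_A\cong M$ by an approximate-unit argument in the ideal $J_A$. The only cosmetic difference is that the paper identifies $M\otimes_A J_A$ with $M$ through the canonical surjection $M=M\otimes_{J_A}J_A\to M\otimes_A J_A$, whereas you verify directly that the multiplication map is an inner-product-preserving isometry with dense range; both amount to the same computation.
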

\begin{proof}
Recall that a semigroup $S$ is regular if any element $s\in S$ has a `pseudoinverse' $t\in S$ such that $sts=s$ and $tst=t$. This follows from the isomorphism $M\cdot M^*\cdot M\cong M$ for any $M\in\mathcal S(A)$, so let us check this isomorphism. Let $\langle M,M\rangle_A=J_A\subset A$. Then $M^*\cdot M\cong J_A$, hence $M\cdot M^*\cdot M\cong M\otimes_A J_A$. 
As in the proof of Lemma \ref{Rieffel-lemma_6.22}, using an approximate unit in $J_A$, it is easy to show that the canonical surjection $M=M\otimes_{J_A}J_A\to M\otimes_A J_A$ is an isomorphism.

\end{proof}

\begin{lem}\label{commute}
Let $I,J\subset A$ be ideals. Then $I\cdot J=J\cdot I\cong I\cap J$.

\end{lem}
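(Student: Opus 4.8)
The plan is to identify both $I\cdot J$ and $J\cdot I$ with the intersection ideal through the multiplication map, reducing everything to a standard $C^*$-algebraic fact. That fact is: for closed two-sided ideals $I,J\subset A$, the closure $\overline{IJ}$ of the linear span of products $xy$ ($x\in I$, $y\in J$) equals $I\cap J$. I would prove this by the usual approximate-unit argument: $\overline{IJ}\subseteq I\cap J$ since both ideals are two-sided, while for $z\in I\cap J$ and an approximate unit $(u_\lambda)$ of $J$ one has $z=\lim_\lambda zu_\lambda$ with $z\in I$ and $u_\lambda\in J$, so $zu_\lambda\in IJ$ and hence $z\in\overline{IJ}$.

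Next I would introduce the candidate isomorphism $\mu\colon I\cdot J=I\otimes_A J\to I\cap J$, defined on elementary tensors by $\mu(x\otimes y)=xy$. It is $A$-balanced, since $\mu(xa\otimes y)=xay=\mu(x\otimes ay)$, so it is well defined on the algebraic balanced tensor product, and it is a bimodule homomorphism because $\mu(a(x\otimes y)b)=(ax)(yb)=a(xy)b$. Recalling that the ideal $J\in\mathcal S(A)$ carries the inner products ${}_A\langle p,q\rangle=pq^*$ and $\langle p,q\rangle_A=p^*q$, a direct computation from the inner product formulas for $M\otimes_A N$ yields
\begin{align*}
\langle x\otimes y,x'\otimes y'\rangle_A&=(xy)^*(x'y')=\langle xy,x'y'\rangle_A,\\
{}_A\langle x\otimes y,x'\otimes y'\rangle&=(xy)(x'y')^*={}_A\langle xy,x'y'\rangle.
\end{align*}
Thus $\mu$ preserves both inner products; in particular it annihilates the null vectors of the inner product and extends to an isometry of the completed module $I\otimes_A J$ onto a closed subspace of $A$. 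Since this range is closed and contains every product $xy$, it contains $\overline{IJ}=I\cap J$, while conversely it is contained in $I\cap J$; hence $\mu$ is an isomorphism of associative Hilbert $C^*$-bimodules and $I\cdot J\cong I\cap J$.

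Finally, commutativity drops out by symmetry: interchanging $I$ and $J$ gives $J\cdot I\cong J\cap I=I\cap J$, so $I\cdot J$ and $J\cdot I$ are isomorphic and therefore equal as elements of $S(A)$. The one point needing care is the passage from the algebraic tensor product over $A$ to the completed interior tensor product: as in the proof of Lemma \ref{Rieffel-lemma_6.22}, an approximate unit for $J_A=\langle I,I\rangle_A=I$ shows that tensoring over $A$ agrees with tensoring over the ideal, so no spurious identifications are lost. I expect the only real obstacle to be the bookkeeping with inner product conventions together with the identity $\overline{IJ}=I\cap J$; everything else is formal.
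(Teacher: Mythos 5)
Your proposal is correct and takes essentially the same route as the paper: both realize the isomorphism via the multiplication map $x\otimes y\mapsto xy$ on $I\otimes_A J$ (shown to be isometric for the Hilbert-module structure) and then identify its range with $I\cap J$. The only difference is in the surjectivity step, where the paper invokes the factorization of each $a\in I\cap J$ as a single product $a=ij$ with $i\in I$, $j\in J$, while you instead use the approximate-unit density argument $\overline{IJ}=I\cap J$ combined with the closedness of the range of an isometry.
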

\begin{proof}
The map $R_J:I\otimes_A J\to I$ is an isometry, hence it remains to show that the range of $R_J$ is $I\cap J$. This can be done by using that any element $a\in I\cap J$ can be written as a product $a=ij$, where $i\in I$, $j\in J$. Similarly, $J\otimes_A I\cong I\cap J$. 

\end{proof}

\begin{thm}
The semigroup $S(A)$ is an inverse semigroup.

\end{thm}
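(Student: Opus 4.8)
The plan is to invoke the classical characterisation that a regular semigroup is an inverse semigroup precisely when its idempotents commute (see \cite{Howie}, \cite{Lawson}). Since regularity of $S(A)$ has already been established, everything reduces to showing that the idempotents of $S(A)$ commute. The strategy will be to prove something stronger, namely that \emph{every} idempotent of $S(A)$ is in fact a projection, and then to appeal to the fact that projections commute by Lemma \ref{commute}.

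First I would record that $S(A)$ is a regular $*$-semigroup. The assignment $M\mapsto M^*$ descends to an involution on isomorphism classes which is involutive, $M^{**}\cong M$, and anti-multiplicative, $(M\cdot N)^*\cong N^*\cdot M^*$ (the latter via $\widetilde{m\otimes n}\mapsto\tilde n\otimes\tilde m$), while the regularity isomorphism $M\cdot M^*\cdot M\cong M$ plays the role of the axiom $xx^*x=x$. By the lemmas identifying projections with ideals, the projections of $S(A)$ (the selfadjoint idempotents) are exactly the ideals $J\subset A$, and by Lemma \ref{commute} any two of them commute, with $I\cdot J\cong I\cap J\cong J\cdot I$.

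Next comes the key purely algebraic step. Let $e\in S(A)$ be any idempotent, $e\cdot e\cong e$, and set $p=e\cdot e^*$, $q=e^*\cdot e$. Each is selfadjoint, and using $e\cdot e^*\cdot e\cong e$ together with its adjoint $e^*\cdot e\cdot e^*\cong e^*$ one checks $p\cdot p\cong p$ and $q\cdot q\cong q$, so $p$ and $q$ are projections. A short computation, using $e\cdot e\cong e$ and its consequence $e^*\cdot e^*\cong e^*$ along with the two regularity relations, then gives $p\cdot q\cong e$ and $q\cdot p\cong e^*$. Since $p$ and $q$ are projections they commute, whence $e\cong e^*$; being already idempotent, $e$ is therefore a projection, i.e. isomorphic to an ideal of $A$.

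Consequently all idempotents of $S(A)$ are projections, and these commute by Lemma \ref{commute}; the standard characterisation then yields that $S(A)$ is an inverse semigroup. The main obstacle I anticipate is not the closing algebra but the verification of the $*$-semigroup axioms at the level of bimodules, concretely, that $M\mapsto M^*$ is anti-multiplicative and involutive up to isomorphism compatibly with the balanced tensor product $\cdot=\otimes_A$, since the regularity isomorphism and the commutation of the ideal-projections are already supplied by the preceding results.
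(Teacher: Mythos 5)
Your proof is correct and follows essentially the same route as the paper: both reduce, via the characterisation of inverse semigroups as regular semigroups with commuting idempotents, to showing that every idempotent $e$ is selfadjoint, and both do so by observing that $e\cdot e^*$ and $e^*\cdot e$ are projections (hence ideals) which commute by Lemma \ref{commute}, so that $e\cong(e\cdot e^*)\cdot(e^*\cdot e)\cong(e^*\cdot e)\cdot(e\cdot e^*)\cong e^*$. Your explicit verification of the $*$-semigroup axioms (involutivity and anti-multiplicativity of $M\mapsto M^*$) spells out a point the paper uses implicitly when asserting that $M^*$ is idempotent, but the underlying argument is identical.
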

\begin{proof}
Recall that a regular semigroup $S$ is an inverse semigroup iff any two idempotents commute.
By Lemma \ref{commute}, any two projections commute. This suffices: if $M$ is idempotent then $M^*$ is idempotent as well, and
\begin{eqnarray*}
M^*&=&M^*\cdot M\cdot M^*=(M^*\cdot M)\cdot (M\cdot M^*)\\
&=&(M\cdot M^*)\cdot (M^*\cdot M)=M\cdot M^*\cdot M\\
&=&M,
\end{eqnarray*}
hence $M$ is a projection.

\end{proof}


\begin{prop}
Let $A$ and $B$ be stronly Morita equivalent $C^*$-algebras. Then $S(A)\cong S(B)$.

\end{prop}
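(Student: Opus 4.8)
The plan is to use the strong Morita equivalence bimodule itself to conjugate. By definition of strong Morita equivalence there is a full $A$-$B$-imprimitivity bimodule $P\in\mathcal S(A,B)$, that is, a member of $\mathcal S(A,B)$ with $J_A=A$ and $J_B=B$. Since $P^*\in\mathcal S(B,A)$, for any $M\in\mathcal S(A)=\mathcal S(A,A)$ the triple product $P^*\cdot M\cdot P$ lies in $\mathcal S(B,B)=\mathcal S(B)$, and this assignment respects isomorphism classes and the involution (because $(X\cdot Y)^*\cong Y^*\cdot X^*$ and $(P^*)^*\cong P$). I would therefore define $\Phi\colon S(A)\to S(B)$ by $\Phi(M)=P^*\cdot M\cdot P$ and, symmetrically, $\Psi\colon S(B)\to S(A)$ by $\Psi(N)=P\cdot N\cdot P^*$, and prove that these are mutually inverse semigroup homomorphisms.

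The two facts that drive everything are $P^*\cdot P\cong B$ and $P\cdot P^*\cong A$. The first is immediate from Lemma \ref{Rieffel-lemma_6.22} together with $J_B=B$; the second follows by applying the same lemma to $P^*$ (whose associated right-inner-product ideal in $A$ is exactly $J_A$) together with $J_A=A$. Using these, I would check the homomorphism property by inserting the isomorphism $P\cdot P^*\cong A$ between the middle factors:
$$
\Phi(M_1)\cdot\Phi(M_2)=P^*\cdot M_1\cdot(P\cdot P^*)\cdot M_2\cdot P\cong P^*\cdot M_1\cdot A\cdot M_2\cdot P\cong P^*\cdot M_1\cdot M_2\cdot P=\Phi(M_1\cdot M_2),
$$
where the last isomorphism uses that $A$ is the unit of $S(A)$, i.e. $M\cdot A\cong M\cong A\cdot M$ (the unit property recorded in the Example above, proved exactly as in the regularity lemma via an approximate unit).

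Finally I would verify that $\Psi$ is a two-sided inverse of $\Phi$. Using associativity of the product together with $P\cdot P^*\cong A$ and the unit property in $S(A)$,
$$
\Psi(\Phi(M))=(P\cdot P^*)\cdot M\cdot(P\cdot P^*)\cong A\cdot M\cdot A\cong M,
$$
and symmetrically $\Phi(\Psi(N))\cong B\cdot N\cdot B\cong N$ using $P^*\cdot P\cong B$. Hence $\Phi$ is a bijective semigroup homomorphism, so $S(A)\cong S(B)$ as semigroups; since it also intertwines the involution it is an isomorphism of inverse semigroups. The only real obstacle is bookkeeping: one must justify that replacing $P\cdot P^*$ by $A$ inside a longer tensor product is legitimate, which is precisely where the fullness hypothesis ($J_A=A$, $J_B=B$) is genuinely used (without it one gets only the ideal $J_A$ in the middle, and the cancellation fails), and one must confirm that the canonical maps $M\otimes_A A\to M$ are isomorphisms, handled by the approximate-unit argument already employed twice in the preceding lemmas.
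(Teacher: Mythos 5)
Your proof is correct and takes essentially the same route as the paper: transport of bimodules via the Morita equivalence bimodule and its dual, with Lemma \ref{Rieffel-lemma_6.22} plus fullness supplying the key isomorphisms $P^*\cdot P\cong B$ and $P\cdot P^*\cong A$. You merely spell out the homomorphism and mutual-inverse verifications (and the unit property $M\cdot A\cong M$) that the paper's one-line proof, which just exhibits the maps built from $M$ and $M^*$, leaves implicit.
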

\begin{proof}
Let $M\in \mathcal S(A,B)$ be an imprimitivity $A$-$B$-bimodule, i.e. a full associative Hilbert $C^*$-bimodule. Then the maps $P\mapsto M\otimes_B P$ and $Q\mapsto M^*\otimes_A Q$, where $P\in\mathcal S(B)$, $Q\in\mathcal S(A)$, are inverse to each other. 

\end{proof}

\begin{prop}
Let $J\subset A$ be an ideal. Then $S(J)\subset S(A)$.

\end{prop}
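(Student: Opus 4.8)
The plan is to realize $S(J)$ inside $S(A)$ by \emph{extending scalars}: I will attach to each $M\in\mathcal S(J)$ an associative Hilbert $C^*$-bimodule over $A$ whose underlying Banach space and whose two inner products are left unchanged (now viewed as taking values in $J\subset A$), only the module actions being enlarged from $J$ to $A$. Since $J$ is an ideal of $A$, there is a canonical $*$-homomorphism $A\to\mathcal M(J)$ into the multiplier algebra, sending $a$ to the pair of operators $j\mapsto aj$, $j\mapsto ja$. By axiom (1) the left and right actions of $J$ on $M$ are non-degenerate, so the $*$-homomorphisms $J\to\mathcal L(M_J)$ and $J\to\mathcal L({}_JM)$ describing them (adjointable by axiom (3)) are non-degenerate and hence extend uniquely to $\mathcal M(J)$; composing with $A\to\mathcal M(J)$ yields the desired left and right $A$-actions on $M$.

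Next I would check that this $M$ belongs to $\mathcal S(A)$. Non-degeneracy of the $A$-actions is immediate since $AM\supset JM$ and $MA\supset MJ$ are already dense; the norm equivalence (4) is untouched because $J\subset A$ is an isometric inclusion of $C^*$-algebras, so the two $A$-valued inner products induce the same norms as before. Adjointability (3) holds because multipliers act by adjointable operators and $A\to\mathcal M(J)$ is a $*$-homomorphism, while the bimodule identity $(am)b=a(mb)$ and the compatibility (5) ${}_A\langle m,n\rangle r=m\langle n,r\rangle_A$ pass from $J$ to $A$ by continuity, the values ${}_A\langle m,n\rangle$ and $\langle n,r\rangle_A$ lying in $J$ and the $J$-actions being unchanged. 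The assignment descends to isomorphism classes, since a $J$-bimodule isomorphism preserving both inner products intertwines the $J$-actions and hence, by uniqueness of the extension to $\mathcal M(J)$, also the enlarged $A$-actions. Write $\Phi\colon S(J)\to S(A)$ for the resulting map.

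That $\Phi$ is a homomorphism reduces to the identity $M\otimes_A N\cong M\otimes_J N$ for $M,N\in\mathcal S(J)$ equipped with the extended structures, since $\Phi(M)\cdot\Phi(N)=M\otimes_A N$ while $\Phi(M\cdot N)=M\otimes_J N$. For this I would run exactly the approximate-unit argument of Lemma \ref{Rieffel-lemma_6.22}: with $(u_\lambda)$ an approximate unit of $J$ one has $mu_\lambda\to m$ and $u_\lambda(an)\to an$ by non-degeneracy, so for $a\in A$ the generator $ma\otimes n-m\otimes an$ of the kernel of the canonical surjection $M\otimes_J N\to M\otimes_A N$ is a limit of the $J$-balanced (hence vanishing) elements $m(u_\lambda a)\otimes n-m\otimes(u_\lambda a)n$, forcing the kernel to be trivial. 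Finally, $\Phi$ is injective: an $A$-bimodule isomorphism between $\Phi(M)$ and $\Phi(M')$ is in particular a $J$-bimodule isomorphism preserving the unchanged inner products, so $M\cong M'$ already in $\mathcal S(J)$. I expect the only delicate point to be the construction of the preceding paragraph — verifying that the $J$-module structure really does extend to a \emph{bona fide} element of $\mathcal S(A)$ through the multiplier algebra; once the standard fact that a non-degenerate Hilbert-module action extends to the multipliers is invoked, the homomorphism and injectivity statements follow the template of the earlier lemmas.
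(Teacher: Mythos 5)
Your proposal is correct and its core is the same as the paper's: you extend the $J$-module structure on $M\in\mathcal S(J)$ to an $A$-module structure using non-degeneracy, and your appeal to the unique extension of a non-degenerate adjointable action to $\mathcal M(J)$, composed with the canonical map $A\to\mathcal M(J)$, is exactly the abstract form of the paper's concrete definition $a\cdot m=\lim_{\Lambda}(au_\lambda)m$ via an approximate unit and a Cauchy-net estimate. The only real difference is coverage, in your favor: the paper's proof stops once the $A$-$A$-bimodule structure is in place, whereas you also verify the bimodule axioms, well-definedness on isomorphism classes, the homomorphism property $M\otimes_A N\cong M\otimes_J N$ (by the same approximate-unit argument as in Lemma~\ref{Rieffel-lemma_6.22}), and injectivity of the resulting map --- all of which the assertion $S(J)\subset S(A)$ implicitly requires.
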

\begin{proof}
Let $M\in\mathcal S(J)$. We show that $M$ can be considered as an $A$-$A$-bimodule. Let $m\in M$, $a\in A$, and let $\{u_\lambda\}_{\lambda\in\Lambda}$ be an approximate unit in $J$. Then set $a\cdot m=\lim_{\Lambda}(au_\lambda)m$. Existence of the limit follows from the estimate 
\begin{eqnarray*}
\|(au_\lambda)m-(au_\mu)m\|^2&=&\|\langle(a(u_\lambda-u_\mu))m,(a(u_\lambda-u_\mu))m\rangle_A\|\\
&\leq&\|a^*(u_\lambda-u_\mu)\langle m,m\rangle_A(u_\lambda-u_\mu)a\| 
\end{eqnarray*}
and from convergence of $(u_\lambda-u_\mu)\langle m,m\rangle_A$.  

\end{proof}

Recall that the isomorphism classes of imprimitivity bimodules over $A$ form a group, called the Picard group of $A$ \cite{Brown-Green-Rieffel}.

\begin{prop}
If $A$ is simple then $S(A)=\{0\}\cup\operatorname{Pic}(A)$.

\end{prop}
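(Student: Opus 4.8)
The plan is to exploit simplicity to force the two ideals attached to a bimodule to be either trivial or everything, and then simply read off the two resulting cases. Since $A$ is simple, its only closed two-sided ideals are $\{0\}$ and $A$. For any $M\in\mathcal S(A)$ the sets $J_A={}_A\langle M,M\rangle$ and $J_B=\langle M,M\rangle_A$ are ideals of $A$, so each of them equals either $\{0\}$ or $A$. The whole argument rests on pinning down which.

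First I would dispose of the degenerate possibilities. Suppose $M\neq\{0\}$ and choose $m\in M$ with $m\neq 0$. By item (4) of the definition of an associative Hilbert $C^*$-bimodule, the norm on $M$ is equivalent to both $\|\cdot\|_A$ and $\|\cdot\|_B$; hence $\|m\|_A\neq 0$ and $\|m\|_B\neq 0$, that is ${}_A\langle m,m\rangle\neq 0$ and $\langle m,m\rangle_A\neq 0$. Consequently $J_A$ and $J_B$ are both nonzero ideals, so by simplicity $J_A=J_B=A$. Thus every nonzero $M$ is full on both sides, i.e. an imprimitivity bimodule, and its isomorphism class lies in $\operatorname{Pic}(A)$.

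For the reverse inclusion, the zero module $\{0\}$ represents $0\in S(A)$, and every imprimitivity bimodule is in particular an element of $\mathcal S(A)$, so $\{0\}\cup\operatorname{Pic}(A)\subseteq S(A)$. Combined with the previous paragraph this yields the set-theoretic equality $S(A)=\{0\}\cup\operatorname{Pic}(A)$.

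It then remains to check compatibility with the group structure, which is immediate: the product $M\cdot N=M\otimes_A N$ in $S(A)$ is exactly the operation used to define $\operatorname{Pic}(A)$, and for full $M$ we have $M^*\cdot M\cong J_B=A$ and $M\cdot M^*\cong J_A=A$ by Lemma \ref{Rieffel-lemma_6.22}, so every nonzero element is invertible with inverse $M^*$. Hence the nonzero part of $S(A)$ is precisely the group $\operatorname{Pic}(A)$. There is no real obstacle here; the only point that must be applied with care is the norm-equivalence axiom (4), which guarantees that a nonzero module cannot have vanishing inner products and thereby rules out any intermediate ``one-sided full'' bimodule.
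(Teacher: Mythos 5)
Your proof is correct and follows essentially the same route as the paper: simplicity forces the two ideals ${}_A\langle M,M\rangle$ and $\langle M,M\rangle_A$ of any nonzero $M$ to equal $A$, so $M$ is full, i.e.\ an imprimitivity bimodule, and the nonzero classes form $\operatorname{Pic}(A)$. You are in fact slightly more careful than the paper in spelling out why a nonzero module has nonzero inner products, so nothing is missing.
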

\begin{proof}
Let $M\in\mathcal S(A)$, $M\neq 0$. As $M^*\cdot M$ and $M\cdot M^*$ are ideals in $A$, we have $M^*\cdot M\cong A\cong M\cdot M^*$, and these isomorphisms are given by $S:\tilde m\otimes n\mapsto \langle m,n\rangle_A$ and $T:m\otimes\tilde n\mapsto{_A\langle m,n\rangle}$, hence $M$ is a full Hilbert $C^*$-nodule, i.e. an imprimitivity bimodule.   

\end{proof}

\begin{prop}
$S(\mathbb C^n)$ is the semigroup of partial bijections of the set $\{1,\ldots,n\}$.

\end{prop}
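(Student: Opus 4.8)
The plan is to realize $\mathbb C^n$ as the algebra of functions on $\{1,\dots,n\}$, with its $n$ minimal projections $e_1,\dots,e_n$ (so $e_ie_k=\delta_{ik}e_i$), and to show that every $M\in\mathcal S(\mathbb C^n)$ splits into one-dimensional pieces whose support is the graph of a partial bijection. First I would record that the ideals of $\mathbb C^n$ are exactly the ``diagonal'' subalgebras $J_S=\bigoplus_{i\in S}\mathbb C e_i$, $S\subseteq\{1,\dots,n\}$, so that $J_A$ and $J_B$ correspond to subsets $S_A,S_B$. For $M\in\mathcal S(\mathbb C^n)$ I set $M_{ij}=e_iMe_j$; using adjointability (item 3) together with $e_ie_k=\delta_{ik}e_i$, one checks that ${}_A\langle M_{ij},M_{kl}\rangle=0$ and $\langle M_{ij},M_{kl}\rangle_B=0$ unless $(i,j)=(k,l)$, so $M=\bigoplus_{i,j}M_{ij}$ orthogonally, with ${}_A\langle M_{ij},M_{ij}\rangle\subseteq\mathbb C e_i$ and $\langle M_{ij},M_{ij}\rangle_B\subseteq\mathbb C e_j$. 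In particular $S_A=\{i:e_iM\ne0\}$ and $S_B=\{j:Me_j\ne0\}$.

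The first key step is to show $\dim M_{ij}\le1$. On a single block both inner products are scalar-valued, and the compatibility axiom (item 5) reads ${}_A\langle m,n\rangle\, r=m\,\langle n,r\rangle_B$, i.e.\ (after stripping $e_i$ and $e_j$) $\langle m,n\rangle_{\mathrm{l}}\,r=\langle n,r\rangle_{\mathrm{r}}\,m$ for all $m,n,r\in M_{ij}$. If $m,r$ are linearly independent this forces $\langle m,n\rangle_{\mathrm{l}}=0$ and $\langle n,r\rangle_{\mathrm{r}}=0$ for every $n$, and taking $n=m$ gives $m=0$, a contradiction; hence each $M_{ij}$ is $0$ or a copy of $\mathbb C$. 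Putting $m=n=r$ in item 5 also shows the two inner products agree on $M_{ij}$, so a nonzero block carries, up to isomorphism, the unique one-dimensional bimodule structure with ${}_A\langle m,m\rangle=e_i$ and $\langle m,m\rangle_B=e_j$.

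The second key step pins down the support $\Gamma=\{(i,j):M_{ij}\ne0\}$. Here I would invoke Lemma \ref{Rieffel-lemma_6.22}, applied to $M$ and to $M^*$, which gives $M^*\cdot M\cong J_B$ and $M\cdot M^*\cong J_A$. Computing the $(j,k)$-block of $M^*\cdot M=M^*\otimes_A M$ shows it is nonzero exactly when some $i$ satisfies $M_{ij}\ne0$ and $M_{ik}\ne0$; since $J_B$ is diagonal, all off-diagonal blocks vanish, so for each $i$ there is at most one $j$ with $M_{ij}\ne0$. Symmetrically, $M\cdot M^*\cong J_A$ gives at most one $i$ per $j$. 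Thus $\Gamma$ is the graph of a bijection $\sigma\colon S_B\to S_A$, that is, a partial bijection of $\{1,\dots,n\}$ with domain $S_B$ and range $S_A$, and by the previous paragraph the isomorphism class of $M$ is determined by $\sigma$. Conversely every partial bijection evidently arises from the corresponding direct sum of one-dimensional blocks, so $M\mapsto\sigma$ is a bijection of $S(\mathbb C^n)$ onto the symmetric inverse monoid $\mathcal I_n$ of partial bijections.

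Finally I would check that this bijection respects products. For $m\in M_{ij}$ and $n\in N_{j'k}$ the balancing relation $me_j\otimes n=m\otimes e_jn$ gives $m\otimes n=\delta_{jj'}\,m\otimes n$, so $(M\cdot N)_{ik}=\bigoplus_j M_{ij}\otimes_{\mathbb C}N_{jk}$, which is nonzero precisely when there is a (necessarily unique) $j$ with $i=\sigma(j)$ and $j=\tau(k)$; hence the support of $M\cdot N$ is the graph of the composite partial bijection $\sigma\circ\tau$, which is exactly the product in $\mathcal I_n$. I expect the main obstacle to be this third step: converting the abstract identities $M^*\cdot M\cong J_B$ and $M\cdot M^*\cong J_A$ into the combinatorial ``at most one entry per row and column'' statement, i.e.\ correctly matching the block decomposition of the interior tensor product with the diagonal form of $J_B$ and $J_A$. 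The dimension count and the product computation are then routine.
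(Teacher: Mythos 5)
Your proof is correct, but it takes a genuinely different route from the paper. The paper argues top-down through structure theory: it sets $J_1={}_A\langle M,M\rangle=C(P)$, $J_2=\langle M,M\rangle_A=C(Q)$, observes that $M$ is a $J_1$-$J_2$-imprimitivity bimodule, invokes the fact that $C(P)$ and $C(Q)$ are strongly Morita equivalent only when $|P|=|Q|$, and then quotes the computation of the Picard group of $C(P)$ (the permutation group of $P$, via Brown--Green--Rieffel) to conclude that $M$ determines and is determined by a partial bijection $P\to Q$. You instead work bottom-up: the block decomposition $M=\bigoplus_{i,j}e_iMe_j$, the elementary observation that axiom (5) forces $\dim e_iMe_j\le 1$, the identification of the support with the graph of a partial bijection via $M^*\cdot M\cong J_B$ and $M\cdot M^*\cong J_A$ (Lemma \ref{Rieffel-lemma_6.22}), and an explicit check that $\otimes_A$ implements composition. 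Your approach is self-contained (no Picard-group input) and, notably, verifies the multiplicative part of the statement --- that the bijection $S(\mathbb C^n)\to\mathcal I_n$ is a semigroup isomorphism --- which the paper's proof leaves implicit; the paper's approach is shorter and exhibits the result as an instance of the Morita-theoretic framework of Section 1. Two small remarks: in your third step you could bypass Lemma \ref{Rieffel-lemma_6.22} entirely, since the orthogonality relations $\langle M_{ij},M_{ik}\rangle_B=0$ for $j\ne k$ together with ${}_A\langle m,m\rangle n=m\langle m,n\rangle_B$ already force at most one nonzero block per row and column; and your choice of $\sigma\colon S_B\to S_A$ (rather than $S_A\to S_B$) is what makes $M\mapsto\sigma$ a homomorphism rather than an anti-homomorphism, so that orientation convention is worth stating explicitly.
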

\begin{proof}
Let $X=\{1,\ldots,n\}$, $\mathbb C^n=C(X)=A$, and let $M\in\mathcal S(C(X))$. Let $J_1={}_A\langle M,M\rangle$, $J_2=\langle M,M\rangle_A$. Then $J_1=C(P)$, $J_2=C(Q)$, where $P,Q\subset X$, and $M$ is $J_1$-$J_2$-imprimitivity bimodule. $C(P)$ and $C(Q)$ are strongly Morita equivalent only if $|P|=|Q|$. The Picard group of $C(P)$ is the group of permutations of $P$, so $M$ determines and is determined by a partial bijection $P\to Q$ (cf. \cite{Brown-Green-Rieffel}).  

\end{proof}

\section{Inverse semigroup from metrics on doubles and Roe bimodules}

Let $X=(X,d_X)$ be a countable discrete metric space, and let $H_X=l^2(X)$ denote the Hilbert space of square-summable complex-valued functions on $X$, with the orthonormal basis consisting of delta functions $\delta_x$, $x\in X$, and with the inner product $(\cdot,\cdot)$. An operator $T\in\mathbb B(H_X)$ is said to have propagation $\leq L$ if $T_{x,y}=0$ whenever $d_X(x,y)>L$, where $T_{x,y}=(\delta_y,T\delta_x)$. The norm closure of the set of all operators of finite propagation is the uniform Roe algebra $C^*_u(X)$.

For a countable discrete metric space $X=(X,d_X)$, let $\mathcal M(X)$ denote the set of metrics on $X\times\{0,1\}$ such that
\begin{itemize}
\item
for any $d\in\mathcal M(X)$, the restriction of $d$ to each copy of $X$ equals $d_X$;
\item
the distance between the two copies of $X$ is non-zero.
\end{itemize}
Let $M(X)$ be the set of coarse equivalence classes of metrics in $\mathcal M(X)$. It was shown in \cite{M1} that $M(X)$ is an inverse semigroup with respect to the concatenation of metrics.

Let $(X,d_X)$ and $(Y,d_Y)$ be two countable discrete metric spaces. Let $Z=X\sqcup Y$, and let $D_{X,Y}$ denote the set of all metrics $d$ on $Z$ such that $d|_X=d_X$ and $d|_Y=d_Y$. For each $d\in D(X,Y)$, let $\mathbb M_d[X,Y]$ denote the set of all bounded finite propagation operators $T:H_X\to H_Y$, and let $M_d(X,Y)$ be its norm closure in the bimodule $\mathbb B(H_X,H_Y)$ of all bounded operators from $H_X$ to $H_Y$. 

If $T,S\in \mathbb M_d[X,Y]$ then $T^*S$ and $TS^*$ are finite propagation operators in $l^2(X)$ and $l^2(Y)$, respectively, hence we can define uniform Roe algebras-valued inner products 
$$
{}_{C^*_u(Y)}\langle T,S\rangle=TS^*\quad\mbox{and}\quad \langle T,S\rangle_{C^*_u(X)}=T^*S. 
$$
Similarly, $M_d(X,Y)$ is a left $C^*_u(Y)$-module and a right $C^*_u(X)$-module. Clearly, this bimodule is associative. We write $M_d(X)$ when $Y=X$.

\begin{lem}\label{isom1}
Let $d_1,d_2\in\mathcal M(X,Y)$, $M_j=M_{d_j}(X,Y)$, $j=1,2$, and let $M_1\cong M_2$.
Let $f:M_1\to M_2$ be an isomorphism of associative Hilbert $C^*$-bimodules over $C^*_u(X)$ and $C^*_u(Y)$. Then $f(m)=\lambda m$ for any $m\in M_1$, where $\lambda\in\mathbb C$, $|\lambda|=1$. In particular, $M_1=M_2$.

\end{lem}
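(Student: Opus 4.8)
The plan is to pin $f$ down on the rank-one matrix units and then bootstrap to all of $M_1$ by comparing matrix entries. For $x\in X$ let $q_x\in C^*_u(X)$ be the orthogonal projection onto $\mathbb C\delta_x$, for $y\in Y$ let $p_y\in C^*_u(Y)$ be the projection onto $\mathbb C\delta_y$, and let $e_{y,x}\colon H_X\to H_Y$ be the rank-one operator $\xi\mapsto(\delta_x,\xi)\delta_y$. Since a single matrix entry has finite propagation $d_j(x,y)<\infty$, every $e_{y,x}$ lies in both $M_1$ and $M_2$; likewise $p_y,q_x$ and the off-diagonal rank-one operators used below all lie in the relevant uniform Roe algebras (they have propagation $0$, resp. $d_Y(y,y')$ or $d_X(x,x')$). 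The one identity to keep in mind is that for any $T\colon H_X\to H_Y$ one has $p_yTq_x=(\delta_y,T\delta_x)\,e_{y,x}$, i.e. compressing by $p_y$ on the left and $q_x$ on the right reads off the $(y,x)$ matrix entry of $T$.

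First I would note $e_{y,x}=p_ye_{y,x}q_x$, so by the bimodule property $f(e_{y,x})=p_yf(e_{y,x})q_x$, and hence $f(e_{y,x})=\lambda_{y,x}e_{y,x}$ for a scalar $\lambda_{y,x}$. Preservation of the $C^*_u(X)$-valued inner product then gives $|\lambda_{y,x}|^2q_x=\langle f(e_{y,x}),f(e_{y,x})\rangle_{C^*_u(X)}=\langle e_{y,x},e_{y,x}\rangle_{C^*_u(X)}=e_{y,x}^*e_{y,x}=q_x$, so $|\lambda_{y,x}|=1$. Next I would show all these scalars coincide. Writing $e^Y_{y',y}\in C^*_u(Y)$ and $e^X_{x,x'}\in C^*_u(X)$ for the analogous rank-one operators, the relations $e^Y_{y',y}e_{y,x}=e_{y',x}$ and $e_{y,x}e^X_{x,x'}=e_{y,x'}$ together with the bimodule property force $\lambda_{y',x}=\lambda_{y,x}$ and $\lambda_{y,x'}=\lambda_{y,x}$; hence $\lambda_{y,x}\equiv\lambda$ is a single constant with $|\lambda|=1$.

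It remains to pass from the matrix units to an arbitrary $m\in M_1$, and I expect this to be the main obstacle: one must resist invoking linearity and continuity, because finite linear combinations of the $e_{y,x}$ are not dense in $M_1$ (they are not even dense in $C^*_u(X)$). The right move is to argue entrywise using the compression that the bimodule map respects. For fixed $x,y$ we have $p_ymq_x=(\delta_y,m\delta_x)\,e_{y,x}$, so by $\mathbb C$-linearity and the bimodule property $p_yf(m)q_x=f(p_ymq_x)=(\delta_y,m\delta_x)\,f(e_{y,x})=\lambda(\delta_y,m\delta_x)\,e_{y,x}=\lambda\,p_ymq_x$. Reading off entries yields $(\delta_y,f(m)\delta_x)=\lambda(\delta_y,m\delta_x)$ for all $x\in X$ and $y\in Y$, and since a bounded operator is determined by its matrix entries in the bases $(\delta_x)$ and $(\delta_y)$, this gives $f(m)=\lambda m$. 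Finally, as $M_1$ is a linear subspace and $\lambda\neq0$, one concludes $M_2=f(M_1)=\lambda M_1=M_1$, which is the last assertion.
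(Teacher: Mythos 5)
Your proposal is correct and follows essentially the same route as the paper's proof: pin down $f$ on the rank-one matrix units by compressing with the diagonal projections, propagate the resulting unimodular scalar to a single constant via the relations $e^Y_{y',y}e_{y,x}=e_{y',x}$ and $e_{y,x}e^X_{x,x'}=e_{y,x'}$, and then identify $f(m)$ with $\lambda m$ entrywise by the same compression trick. Your explicit remark that one cannot invoke density of finite linear combinations of matrix units, and must instead compare matrix entries, is exactly the point the paper's computation $e_{y,y}f(m)e_{x,x}=f(e_{y,y}me_{x,x})$ implicitly relies on.
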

\begin{proof}
Let $x\in X$, $y\in Y$, and let $e_{x,y}$ denote the elementary operator corresponding to these two points, i.e. $e_{x,y}\delta_z=\left\lbrace\begin{array}{cl}\delta_y&\mbox{if\ }z=x;\\0&\mbox{if\ }z\neq x,\end{array}\right.$ where $\delta_x\in H_X$ is the delta-function of the point $x$. Clearly, $e_{x,y}\in M_d(X,Y)$ for any $d\in \mathcal M(X,Y)$. The elementary operators $e_{x,x}$ and $e_{y,y}$ lie in $C^*_u(X)$ and in $C^*_u(Y)$, respectively. Then
$$
f(e_{x,y})=f(e_{y,y}e_{x,y}e_{x,x})=e_{y,y}f(e_{x,y})e_{x,x}=\lambda_{x,y}e_{x,y} 
$$ 
for some $\lambda_{x,y}\in\mathbb C$, and as $f$ is an isometry, $|\lambda_{x,y}|=1$.

Let $z\in X$. Then $e_{z,x}\in C^*_u(X)$ and $e_{z,y}=e_{x,y}e_{z,x}$, so
$$
\lambda_{z,y}e_{z,y}=f(e_{z,y})=f(e_{x,y}e_{z,x})=f(e_{x,y})e_{z,x}=\lambda_{x,y}e_{x,y}e_{z,x}=\lambda_{x,y}e_{z,y},
$$
hence $\lambda_{z,y}=\lambda_{z,y}$ for any $x,z\in X$ and $y\in Y$. Similarly, $\lambda_{x,y}=\lambda_{x,u}$ for any $y,u\in Y$ and $x\in X$. Thus, $\lambda_{x,y}=\lambda$ for any $x\in X$, $y\in Y$.

Let $m\in M_1$, $m=\sum_{x\in X,y\in Y}m_{x,y}e_{x,y}$. Then 
\begin{eqnarray*}
f(m)_{x,y}e_{x,y}&=&e_{y,y}f(m)e_{x,x}=f(e_{y,y}me_{x,x})=f(e_{y,y}m_{x,y}e_{x,y}e_{x,x})=
m_{x,y} f(e_{x,y})\\
&=&\lambda m_{x,y} e_{x,y}, 
\end{eqnarray*} 
hence $f(m)=\lambda m$.

\end{proof}

This gives a semigroup homomorphism 
\begin{equation}\label{map1}
i:M(X)\to S(C^*_u(X)),\quad i([d])=[M_d(X)].
\end{equation}

\begin{thm}
The map $i$ (\ref{map1}) is injective.

\end{thm}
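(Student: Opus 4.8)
The plan is to reduce the statement to Lemma \ref{isom1}, which tells us that an isomorphism of the bimodules $M_{d_1}(X)$ and $M_{d_2}(X)$ forces them to \emph{coincide} as subspaces of $\mathbb B(H_X)$. So it suffices to prove that the assignment $d\mapsto M_d(X)$ separates coarse equivalence classes, i.e. that $M_{d_1}(X)=M_{d_2}(X)$ implies $d_1$ and $d_2$ are coarsely equivalent. Writing $x_0=(x,0)$ and $y_1=(y,1)$ for the corresponding points of the double $X\times\{0,1\}$, and using that $d_1$ and $d_2$ both restrict to $d_X$ on each copy, the identity map of the double is a coarse equivalence precisely when the cross controlled sets are cofinal in one another: for every $R$ there is $S$ with $\{(x,y): d_1(x_0,y_1)\le R\}\subseteq\{(x,y): d_2(x_0,y_1)\le S\}$, and symmetrically. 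Thus the whole argument comes down to extracting this comparability from the equality of the two modules.

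I would argue by contradiction. Assuming (by symmetry) that some $R$ admits pairs $(a_n,b_n)$ with $d_1((a_n)_0,(b_n)_1)\le R$ but $d_2((a_n)_0,(b_n)_1)\to\infty$, the goal is to exhibit a single operator in $M_{d_1}(X)\setminus M_{d_2}(X)$. The natural candidate, along a suitable subsequence, is the partial isometry $T=\sum_k e_{a_{n_k},b_{n_k}}$. Its matrix is supported on pairs of $d_1$-distance $\le R$, so $T$ has $d_1$-propagation $\le R$ and lies in $M_{d_1}(X)$. On the other hand, any operator $T'$ of $d_2$-propagation $\le S$ satisfies $T'_{a_{n_k},b_{n_k}}=0$ as soon as $d_2((a_{n_k})_0,(b_{n_k})_1)>S$, whence $\|T-T'\|\ge |(T-T')_{a_{n_k},b_{n_k}}|=1$; hence $T$ cannot be norm-approximated by finite-$d_2$-propagation operators and $T\notin M_{d_2}(X)$, contradicting $M_{d_1}(X)=M_{d_2}(X)$.

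The one genuine point needing care, and the main obstacle, is that for $T$ to be a bounded (norm-one) operator the selected pairs must have pairwise distinct first coordinates and pairwise distinct second coordinates. I would secure this by a triangle-inequality estimate: if the first coordinates ranged over a finite set, some value $a$ would recur for infinitely many pairs $(a,b)$, and then $d_X(b,b')=d_1(b_1,b'_1)\le d_1(b_1,a_0)+d_1(a_0,b'_1)\le 2R$, so relative to a fixed reference pair $d_2(a_0,b_1)\le d_2(a_0,b'_1)+d_X(b',b)\le d_2(a_0,b'_1)+2R$ remains bounded, contradicting $d_2\to\infty$. The same estimate shows that for any finite set of already-chosen coordinates the pairs with large $d_2$-distance avoid it, so a greedy selection produces a subsequence with distinct coordinates in both slots. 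This yields $M_{d_1}(X)\neq M_{d_2}(X)$, the desired contradiction; the controlled sets are therefore comparable in both directions, $d_1$ and $d_2$ are coarsely equivalent, and $i$ is injective.
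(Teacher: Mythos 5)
Your proof is correct and follows essentially the same route as the paper's: reduce via Lemma \ref{isom1} to the fact that isomorphic Roe bimodules must coincide as subspaces of $\mathbb B(H_X)$, then, assuming the metrics are not coarsely equivalent, build the partial isometry $\sum_k e_{a_{n_k},b_{n_k}}$ lying in $M_{d_1}(X)\setminus M_{d_2}(X)$. The only difference is that you explicitly justify passing to a subsequence with pairwise distinct first and second coordinates (needed for the sum to define a bounded, strongly convergent partial isometry), a point the paper's proof leaves implicit.
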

\begin{proof}
Let $d_1,d_2\in\mathcal M(X)$, $M_j=M_{d_j}(X)\in i([d_j])$, $j=1,2$, and let $M_1\cong M_2$. 
Assume the contrary, i.e. that $[d_1]\neq [d_2]$. Then there exist two sequences $(x_n),(y_n)$, $n\in\mathbb N$, of points in $X$, and $C>0$ such that $d_1((x_n,0),(y_n,1))<C$ for any $n\in\mathbb N$, and $\lim_{n\to\infty}d_2((x_n,0),(y_n,1))=\infty$.
Set $m=\sum_{n\in\mathbb N}e_{(x_n,0),(y_n,1)}\in\mathbb B(H_X,H_Y)$ (the sum is strongly convergent). Then $m\in M_1$, but $m\notin M_2$, but this conradicts $M_1\cong M_2$ by Lemma \ref{isom1}.

\end{proof}

On the other hand, the map $i$ (\ref{map1}) is far from being surjective. We show this for two examples, one very simple, and another more deep.

\begin{example}
Let $X$ be a countable set with the metric $d_X$ defined by $d_X(x,y)=1$ if $x\neq y$, $x,y\in X$. Then $C_u^*(X)=\mathbb B(H_X)$. Any two metrics in $\mathcal M(X)$ are coarsely equivalent, hence $M(X)$ consists of a single element. On the other hand, there are at least two different associative Hilbert $C^*$-bimodules over $\mathbb B(H_X)$: $\mathbb B(H_X)$ itself and the ideal $\mathbb K(H_X)$ of compact operators. It is easy to see that $i(M(X))=[\mathbb B(H_X)]$.

\end{example} 

Before we turn to a more deep example, let us recall that a discrete metric space $X$ is of bounded geometry if the numbers of points in all balls of radius $R$ are uniformly bounded for any $R>0$. 

An element $a\in C^*_u(X)$ is a ghost element if $\lim_{x,z\to\infty}a_{x,y}=0$, where $a_{x,z}$, $x,z\in X$, denote the matrix entries of $a$ with respect to the basis $\delta_x$, $x\in X$. The set of all ghost elements forms an ideal in $C^*_u(X)$ called the ghost ideal. This ideal contains the ideal $\mathbb K(H_X)$ of all compact operators, and it was shown in \cite{Roe-Willett} that it equals $\mathbb K(H_X)$ if and only if $X$ has property A of Guoliang Yu.

\begin{prop}
Let $X$ be a discrete metric space of bounded geometry without property A. Then the map $i$ is not surjective.

\end{prop}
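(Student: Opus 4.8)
The plan is to exhibit a single element of $S(C^*_u(X))$ that is not in the image of $i$, namely the class of the ghost ideal $J_g \subset A:=C^*_u(X)$. As an ideal, $J_g$ lies in $\mathcal S(A)$ and is a projection (the Example following the definition of idempotent), so $[J_g]\in S(A)$. Since every compact operator is a ghost, $\mathbb K(H_X)\subseteq J_g$ always; and because $X$ fails property A, the result of Roe--Willett quoted above gives $J_g\neq\mathbb K(H_X)$, so in fact $\mathbb K(H_X)\subsetneq J_g$. I would then argue by contradiction: assume $[J_g]=i([d])=[M_d(X)]$ for some $d\in\mathcal M(X)$, so that there is an isomorphism $f:M_d(X)\to J_g$ of associative Hilbert $C^*$-bimodules over $A$.

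First I would record how such an $f$ transports the $A$-valued inner product. As in Lemma \ref{isom1}, an isomorphism is an isometry for the inner product, so $\langle f(m),f(n)\rangle_A=\langle m,n\rangle_A$ for all $m,n$. Since $f$ is surjective and $\langle J_g,J_g\rangle_A=\overline{\operatorname{span}}\{j^*j':j,j'\in J_g\}=J_g$ (use an approximate unit of $J_g$), this forces the equality of subsets of $A$
\[
\langle M_d(X),M_d(X)\rangle_A=J_g .
\]
Hence it suffices to prove that the ideal $J:=\langle M_d(X),M_d(X)\rangle_A$ can never equal $J_g$, which I would do by showing that $J$ is always the closed span of the finite-propagation operators it contains, whereas $J_g$ is not. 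Write $\mathbb C_u[X]$ for the $*$-subalgebra of finite-propagation operators on $H_X$.

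For the statement about $J$: the finite-propagation cross operators $\mathbb M_d[X]$ are dense in $M_d(X)$, and for $m,n\in\mathbb M_d[X]$ the product $m^*n$ is finite-propagation on $H_X$. Indeed, if $m,n$ have $d$-propagation $\le L_m,L_n$, then $(m^*n)_{x,x'}=\sum_y n_{x,y}\overline{m_{x',y}}$ is nonzero only when some $y$ satisfies $d((x,0),(y,1))\le L_n$ and $d((x',0),(y,1))\le L_m$, whence $d_X(x,x')\le L_m+L_n$ by the triangle inequality. By continuity of the inner product, $J=\overline{\operatorname{span}}\{m^*n:m,n\in\mathbb M_d[X]\}$ is the closure of a set of finite-propagation operators, so $J=\overline{J\cap\mathbb C_u[X]}$. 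For $J_g$ I would prove the key lemma that on a bounded geometry space every finite-propagation ghost operator is compact: if $a$ has propagation $\le L$ and $N:=\sup_x|B(x,L)|<\infty$, then the ghost condition together with finite propagation makes $\{(x,z):|a_{x,z}|\ge\varepsilon\}$ finite for every $\varepsilon>0$; subtracting this finite-rank part leaves a finite-propagation operator with all entries $<\varepsilon$, of norm $\le N\varepsilon$ by a Schur estimate, so $a\in\mathbb K(H_X)$. Therefore $\overline{J_g\cap\mathbb C_u[X]}=\mathbb K(H_X)\subsetneq J_g$, i.e. $J_g$ is strictly larger than the closure of its finite-propagation part. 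This contradicts $J_g=J=\overline{J\cap\mathbb C_u[X]}$, and so $[J_g]\notin i(M(X))$.

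The main obstacle is the key lemma that finite-propagation ghosts are compact on bounded geometry spaces: this is where both hypotheses genuinely enter, bounded geometry through the Schur norm estimate, and the failure of property A through Roe--Willett to ensure $\mathbb K(H_X)\subsetneq J_g$. The remaining steps are bookkeeping about how a bimodule isomorphism carries the $A$-valued inner product and hence equates the associated ideals as actual subsets of $A$.
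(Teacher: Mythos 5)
Your proof is correct, but it takes a genuinely different route from the paper's. The paper argues through the structure theory of $M(X)$: since the ghost ideal $I$ is an idempotent in $S(C^*_u(X))$, a preimage $[d]$ would have to be an idempotent in $M(X)$, and the paper then invokes the characterization of idempotent metrics from \cite{M2} (every such $d$ is coarsely equivalent to a metric $d_{\mathcal A}$ built from a nested sequence of subsets $A_n$); picking a sparse sequence in some unbounded $A_n$, it constructs an explicit element $m=\sum_k e_{(x_k,0),(x_k,1)}\in M_d(X)$ which is not a ghost, and concludes by the rigidity statement of Lemma \ref{isom1} (isomorphism of these bimodules forces equality). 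You instead compare the inner-product ideals: an isomorphism $M_d(X)\cong I$ forces $\langle M_d(X),M_d(X)\rangle_{C^*_u(X)}=I$ as subsets of $C^*_u(X)$, and you show this is impossible because the left-hand ideal is always the closed span of finite-propagation operators (products $m^*n$ of finite-propagation cross operators have finite propagation by the triangle inequality in $d$), whereas on a bounded geometry space every finite-propagation ghost is compact (finite-rank truncation plus a Schur estimate), so the closure of the finite-propagation part of $I$ is only $\mathbb K(H_X)\subsetneq I$ by Roe--Willett. Your approach buys uniformity and self-containedness: it works for an arbitrary $d\in\mathcal M(X)$ with no case analysis, avoids the external dependence on the idempotent classification of \cite{M2}, and replaces the element-level rigidity argument by the soft fact that isomorphisms transport inner-product ideals (which holds since a surjective isometric module map preserves inner products). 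What the paper's route buys is sharper, more concrete information -- it exhibits the exact finite-propagation element witnessing the failure and ties the result to the structure of idempotents in $M(X)$ developed in the companion papers -- at the cost of quoting that structure theory. Both proofs rest on the same input from \cite{Roe-Willett}, namely that absence of property A makes the ghost ideal strictly larger than $\mathbb K(H_X)$.
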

\begin{proof}
Let $I\subset C^*_u(X)$ denote the ghost ideal. Then it properly contains $\mathbb K(H_X)$. We claim that $[I]\in S(C^*_u(X))$ does not lie in the image of $i$. Assume the contrary: let $[d]\in\mathcal M(X)$, $i([d])=[I]$. As $[I]$ is an idempotent in $S(C^*_u(X))$, $[d]$ should be an idempotent in $M(X)$. The characterisation of idempotents was obtained in \cite{M2}: any such $d$ is coarsely equivalent to the following metric 
$$
d_{\mathcal A}((x,0),(y,1))=\inf_{n\in\mathbb N}\inf_{z\in A_n}d_X(x,z)+n+d_X(z,y),
$$ 
where $\{A_n\}_{n\in\mathbb N}$ is a sequence of subsets in $X$ such that, for each $n\in\mathbb N$, $A_{n+1}$ contains the 1-neighborhood $N_1(A_n)$ of $A_n$, i.e. the set $N_1(A_n)=\{x\in X:d_X(x,A_n)\leq 1\}$. 

The metric $d_{\mathcal A}$ represents the zero element $0\in M(X)$ if each $A_n$ is bounded. If [d]=0 then the bimodule $M_d(X)$ equals the set of compact operators, which differs from $[I]$, so $[d]\neq 0$. Then there exists $n\in\mathbb N$ such that $A_n$ is not bounded, i.e. there exists a sequence $\{x_k\}_{k\in\mathbb N}$ of points in $A_n$ such that $\lim_{k\to\infty}d_X(x_k,\{x_1,\ldots,x_{k-1}\})=\infty$. Then $d_{\mathcal A}((x_k,0),(x_k,1))\leq n$ for any $k\in\mathbb N$. As $d$ and $d_{\mathcal A}$ are coarsely equivalent, there exists $C>0$ such that $d((x_k,0),(x_k,1))\leq C$ for any $k\in\mathbb N$. Set $m=\sum_{k\in\mathbb N}e_{(x_k,0),(x_k,1)}$ (the sum is strongly convergent). Then $m\in M_d(X)$, but $m\notin I$, hence $i([d])=[M_d(X)]\neq [I]$ (as in the proof of Lemma \ref{isom1}, isomorphism of associative Hilbert $C^*$-bimodules represented on $H_X\oplus H_X$ implies equality of these bimodules). 

\end{proof}

Recall that two metrics, $d_X$ and $b_X$, on $X$, are coarsely equivalent if there exists a homeomorphism $\varphi$ of $[0,\infty)$ such that $\varphi^{-1}(b_X(x,y))<d_X(x,y)<\varphi(b_X(x,y))$ for any $x,y\in X$.
It was shown in \cite{M3} that the inverse semigroup $M(X)$ is not coarsely invariant, e.g. there exists a space $X$ with two coarsely equivalent metrics, $d_X$ and $d'_X$ such that $M(X,d_X)$ is commutative, while $M(X,d'_X)$ is not. Now we can define a new semigroup $M_c(X)$ for a metric space $(X,d_X)$ by 
$$
M_c(X)=\cap_{d\in[d_X]}i(M(X,d)).
$$ 
\begin{prop}
$M_c(X)$ is an inverse semigroup invariant under coarse equivalence of metrics on $X$.

\end{prop}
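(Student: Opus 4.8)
The plan is to treat the two assertions separately—that $M_c(X)$ is an inverse semigroup and that it is coarsely invariant—and to observe that both follow once one notices that every ingredient of the definition $M_c(X)=\cap_{d\in[d_X]}i(M(X,d))$ is already a coarse invariant, \emph{except} the individual factors $i(M(X,d))$, whose metric-dependence (the phenomenon of \cite{M3}) is precisely what the intersection washes out. So the whole point is to exhibit a fixed ambient inverse semigroup into which all the factors map, and then to run a soft closure-under-intersection argument.

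First I would pin down that ambient object. The uniform Roe algebra depends only on the coarse equivalence class of the metric, so for every $d\in[d_X]$ the two copies of $X$ in the double carry metrics coarsely equivalent to $d_X$, and the Roe bimodule $M_e(X)$ attached to a metric $e$ on the double is a bimodule over the single algebra $C^*_u(X)=C^*_u(X,d_X)$. Consequently all the maps $i=i_d\colon M(X,d)\to S(C^*_u(X))$ share one and the same target inverse semigroup, and the intersection in the definition of $M_c(X)$ is a genuine intersection of subsets of $S(C^*_u(X))$.

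Next I would check that each $i_d(M(X,d))$ is an inverse subsemigroup of $S(C^*_u(X))$. Since $M(X,d)$ is an inverse semigroup and $i_d$ is an injective semigroup homomorphism, its image is a subsemigroup; moreover homomorphisms of inverse semigroups preserve inverses, since from $i_d(s)\,i_d(s^{-1})\,i_d(s)=i_d(s)$ and $i_d(s^{-1})\,i_d(s)\,i_d(s^{-1})=i_d(s^{-1})$ and uniqueness of inverses in $S(C^*_u(X))$ one gets $i_d(s^{-1})=i_d(s)^{-1}$. Hence $i_d(M(X,d))$ is closed under the inverse operation of $S(C^*_u(X))$. An arbitrary intersection of inverse subsemigroups is again closed under multiplication and under inversion, and it is nonempty: it contains the zero element $[\mathbb K(H_X)]$, which is $i_d(0)$ for every $d$ because the compacts $\mathbb K(H_X)$ do not depend on the metric. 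Therefore $M_c(X)$ is an inverse subsemigroup of $S(C^*_u(X))$, in particular an inverse semigroup.

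Finally, invariance is essentially formal. If $b_X$ is coarsely equivalent to $d_X$, then $[b_X]=[d_X]$ as coarse equivalence classes, so the index set over which one intersects is unchanged, and $C^*_u(X,b_X)=C^*_u(X,d_X)$, so the common target $S(C^*_u(X))$, the family of semigroups $M(X,d)$, and the maps $i_d$ are literally the same; hence $M_c(X,b_X)=M_c(X,d_X)$. The only place where anything beyond bookkeeping enters—and thus the main point to get right—is the verification underlying the second paragraph: the coarse invariance of the Roe algebra, together with the fact that altering the metric on the double within its coarse class does not change the ambient algebra $C^*_u(X)$ over which the bimodule is taken. This is what makes the seemingly metric-dependent factors $i_d(M(X,d))$ comparable inside one fixed inverse semigroup and lets the intersection collapse the dependence on the choice of metric exhibited in \cite{M3}.
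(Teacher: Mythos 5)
Your proof is correct and is essentially the argument the paper intends: the paper states this proposition without any proof, and your write-up supplies exactly the omitted details — the literal equality $C^*_u(X,d)=C^*_u(X,d_X)$ for all $d\in[d_X]$ (so that all images $i_d(M(X,d))$ sit inside the single ambient inverse semigroup $S(C^*_u(X))$), closure of each image under ambient inversion via uniqueness of inverses, nonemptiness of the intersection via the common zero element $[\mathbb K(H_X)]$, and the purely formal invariance under replacing $d_X$ by a coarsely equivalent metric. Nothing is missing.
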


\begin{example}
Let $x_n=n^2$, $X=\{x_n\}_{n\in\mathbb Z}\subset\mathbb R$ equipped with the metric $d_X$ induced from the standard metric on $\mathbb R$. 
Let $Y=\{n^3:n\in\mathbb N\}$, and let $f:X\to Y$ be the map defined by $f(x_n)=\left\lbrace\begin{array}{cl}(2n)^3,&\mbox{if\ }n \geq 0;\\
-(2n+1)^3,&\mbox{if\ }n<0.\end{array}\right.$ Let $b_X$ be themetric on $X$ defined by $b_X(x_n,x_m)=|f(x_n)-f(x_m)|$, $n,m\in\mathbb Z$. It is easy to see that $X$ and $Y$ are coarsely equivalent, hence the metrics $d_X$ and $b_X$ are coarsely equivalent. While $M(X,d_X)$ is not commutative, $M(S,b_X)$ is commutative by Proposition 7.1 of \cite{M1}. Thus, $M_c(X)$ should be commutative. 

\end{example}

\end{document}